\numberwithin{equation}{section}
\newtheorem{theorem}{\bf Theorem}[section]
\newtheorem{remark}{\bf Remark}[section]
\newtheorem{lemma}{Lemma}[section]
\newtheorem{corollary}{Corollary}[section]
\newtheorem{example}{Example}[section]
\newcommand{\ncom}{\newcommand}
\ncom{\nno}{\nonumber}
\ncom{\nin}{\noindent}
\newcommand{\ds}{\displaystyle}
\begin{document}
	
	\hfill{\today}
	\vspace*{.6cm}
	
	\title{Explicit Formulas for the  Divergence Operator in Isonormal Gaussian Space}
	\author{ S. Levental and P. Vellaisamy}
	\address{Department of Mathematics, Indian Institute of Technology Bombay,
		Powai, Mumbai 400076, INDIA}
	\address{Department of Statistics and Probability, Michigan State
		University, East Lansing, MI 48824, USA.}
	\email{pv@math.iitb.ac.in, levental@.msu.edu}
	\thanks{This work was completed while the first author was visiting the
		Department of Statistics and Probability, Michigan State University, during
		Summer-2019.}
	\keywords{Binomial theorem, Divergence operator, Hermite polynomial, Malliavin derivative, isonormal Gaussian space}
	\date{}
	\subjclass[2010]{Primary: 65L99; Secondary: 93E25, 60E05.}
	
	\begin{abstract}
		In this paper, we  first derive some explicit formulas for the computation of the $n$-th order divergence operator in Malliavin calculus in  the one-dimensional case.  We then extend these results  to the case of isonormal Gaussian space. Our results
		generalize some  of the known results for the divergence operator.	Our approach in deriving the formulas is new and simple.
	\end{abstract}
	
	\maketitle

	\section{Introduction}
	
	\nin Malliavin calculus is an infinitesimal differential calculus on the Weiner space.
	It deals with the random elements, which are functions of possible infinite dimensional Gaussian field. The Malliavin derivative
	and the divergence operator, the adjoint of the derivative operator, form some of the main
	tools of the Malliavin calculus. The divergence operator can  be viewed also as
	a generalized stochastic integral or Skorohod integral which forms the basis of extending stochastic calculus from  adaptive to anticipating 
	one.
	
	\vspace{.3cm}
	\nin The main objective  of the paper is  to provide some explicit formulas for the  divergence operator of the $n$-th order $n$, in certain cases. First, we discuss the one-dimensional case,  in the spirit of  Chapter 1 of Nourdin and Peccati (2012), referred to hereafter as NP(2012). We then extend the results to the usual setup of isonormal Gaussian spaces, which are discussed in detail in several books; see for example, NP (2012) and Nualart (2009). For a recent introduction to
	this topic, see Nualart and Nualart (2018). 
	
	\vspace{.3cm}
	\nin The divergence operator is closely connected with Hermite polynomials. Therefore, we start  with some basic properties of Hermite polynomials that are used in the paper. Though most of these results are known, the proofs are rather different. For the one-dimensional setup, we derive an interesting and explicit formula for the divergence operator of the $n$-th order $n$ that is acting on smooth functions. This new result appears in Theorem 3.1 which serves as a basis to proving all the results that follow. Another result, Theorem 3.2, presents an interesting alternative formula to that of Theorem 3.1. It seems that Theorem 3.2 formulation cannot be achieved without the help of Theorem 3.1. Also, using Theorem 3.1, we prove the analogous result for the usual setup of Malliavin calculus, the so-called isonormal Gaussian spaces. The outcome is Theorem 4.1 which generalizes a fundamental result in the literature, as illustrated in Example 4.1. Another concrete formula, that is new and 
	concerns the Malliavin derivative and the divergence operator,  is Corollary 4.1.  A special case of Corollary 4.1 is similar to a well-known result
	in the literature.

	\vspace{.3cm}
	\nin The paper is organized as follows: In Section 2, some basic properties of Hermite polynomials are presented with proofs that are based on our approach. In Section 3, we deal with the one-dimensional case with the high point being the explicit formula that is presented in Theorem 3.1. For the one-dimensional case, this result leads to some additional
	and interesting formulas for the divergence operator and also a result that connects Hermite polynomial with the standard normal variate.  In Section 4, we deal with the extension of the main result of Section 3 to isonormal Gaussian spaces. This main result  generalizes a result which is considered as an important result  in the literature.\\

\section{Some basic Results on Hermite Polynomials}

In this section, we briefly discuss  Hermite polynomials and their properties, as they
are closely connected with the divergence operator.
\noindent The Hermite polynomials, denoted by $H_n(x), x \in R, $ is defined as
\begin{align}
H_n(x)=& (-1)^n e^{x^2/2} D^n  e^{-x^2/2},  
\end{align}
where $D= \frac{d}{dx}.$ 
Indeed, the above definition of Hermite polynomials is called Rodriques' formula. We express  $H_n(x)$ using the 
standard normal density, as this approach is easier to study
its properties. Let $N(0,1)$ denote the standard normal distribution. Then
\begin{align} \label{212}
H_n(x)=&(-1)^n \sqrt{2\pi}e^{x^2/2} \phi^{(n)}(x) \nno \\
=& (-1)^n \frac{\phi^{(n)}(x)}{\phi (x)},
\end{align}
where $\phi(x)$ denotes the density of the random variable $N \sim N(0, 1)$. Here and henceforth, $ \phi^{(0)}(x)= \phi(x)$ and $ \phi^{(n)}(x)$ denotes the $n$-th derivative of  $\phi(x)$. Also, it can easily be checked that $ \phi^{(1)}(x)
= \phi^{'}(x)= -x \phi(x).$ This fact is used often in proving our results related to Hermite polynomials.

\vspace{.3cm}
 It follows easily from \eqref{212} that
$ H_0(x)=1,~H_1(x)=x, ~H_2(x)=x^2-1,~ H_3(x)=x^3-3x, ~H_4(x)=x^4-6x^2+3$,~
and etc.

\vspace{.3cm}
First we state and prove some important properties of $H_n(x)$, using our approach,
which will be useful later. Though most of the the results are known, our proofs and approach are new and different. We have also used  both the prime and the integer 1 in the superscript
 of a function to denote its first derivative. This is
 used as per the context and to  avoid any further confusion.

\begin{lemma}\label{lem21}
Let   $H_{n}(x)$ be $n$-th degree Hermite polynomial. Then the  following holds:
 For $ n \geq 1$,
\begin{enumerate}
\item[(i)] We have
\begin{align} \label{213}
 H_{n+1}= H_1H_n-nH_{n-1}.
\end{align}	
 \item[(ii)] The derivative $H_n^{'}(x)$ of $H_n(x)$ satisfies
 \begin{align} \label{214}
H_{n}^{'}=&   H_1 H_n-H_{n+1} 
    = nH_{n-1}.
 \end{align}

 \item[(iii)] The exponential generating function of $H_n(x)$ is
 \begin{align} \label{215}
 G_{H}(t,x)=& \sum_{n=0}^{\infty} \frac{t^n}{n!} H_n(x)  =\frac{\phi(t-x)}{\phi(x)}.
 \end{align}
\end{enumerate}
\end{lemma}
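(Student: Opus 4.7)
The plan is to exploit the compact form $\phi^{(n)}(x) = (-1)^n H_n(x)\phi(x)$ (a rewrite of (2.2)) together with the single basic fact $\phi'(x) = -x\phi(x)$. Rather than proving (i), (ii), (iii) in the stated order, I would establish (iii) first by a one-line Taylor argument, and then extract (i) and (ii) from the resulting generating function. This gives a uniform and self-contained treatment from the density viewpoint, consistent with the approach the authors announce.

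For (iii), I would use the even symmetry $\phi(t-x) = \phi(x-t)$ and Taylor-expand $\phi(x-t)$ in $t$ around $0$ with base point $x$:
\[
\phi(x-t) = \sum_{n=0}^{\infty} \frac{(-t)^n}{n!}\phi^{(n)}(x).
\]
Dividing by $\phi(x)$ and substituting $\phi^{(n)}(x)/\phi(x) = (-1)^n H_n(x)$ collapses the $(-1)^n$ factors and yields the generating function identity. As an independent cross-check, simple algebra in the exponent shows $\phi(t-x)/\phi(x) = e^{tx - t^2/2}$, which is the familiar closed form.

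For (i) and (ii), I would now differentiate $G_H(t,x) = e^{tx - t^2/2}$ in $x$ and in $t$. From $\partial_x G_H = t\, G_H$, matching coefficients of $t^n/n!$ gives $H_n'(x) = n H_{n-1}(x)$, which is the second equality in (ii). From $\partial_t G_H = (x-t)\, G_H$, matching coefficients of $t^n/n!$ gives $H_{n+1}(x) = x H_n(x) - n H_{n-1}(x) = H_1 H_n - n H_{n-1}$, which is (i). Substituting $n H_{n-1} = H_n'$ into (i) then yields $H_{n+1} = H_1 H_n - H_n'$, i.e.\ the first equality in (ii).

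There is no real obstacle here; the work is just careful index-shifting in the power series identities and checking the boundary cases $n=0,1$. As a backup route, one can bypass the generating function for (i)–(ii): differentiating $\phi^{(n)} = (-1)^n H_n \phi$ and using $\phi' = -x\phi$ immediately gives $H_{n+1} = x H_n - H_n'$, after which $H_n' = n H_{n-1}$ follows by a short induction on $n$ using only this relation. Either way, the argument is driven entirely by Rodrigues' formula and the single identity $\phi' = -x\phi$.
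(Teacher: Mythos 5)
Your proposal is correct, but it reorganizes the argument in a genuinely different way from the paper. The paper proves the three parts independently and in order: (i) comes from the derivative recurrence $\phi^{(n)}=-\bigl[x\phi^{(n-1)}+(n-1)\phi^{(n-2)}\bigr]$ divided through by $\phi$; (ii) comes from differentiating $H_n=(-1)^n\phi^{(n)}/\phi$ by the quotient rule and then combining with (i); and (iii) is the same Taylor-expansion argument you give. You instead prove (iii) first and then harvest (i) and (ii) by differentiating the closed form $G_H(t,x)=e^{tx-t^2/2}$ in $t$ and in $x$ and matching coefficients of $t^n/n!$. There is no circularity, since your proof of (iii) uses only the definition \eqref{212} and Taylor's theorem for the entire function $\phi$. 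What your route buys is uniformity: all three identities flow from one closed-form expression, and the index bookkeeping is transparent. What the paper's route buys is that (i) and (ii) are established by purely finite algebraic manipulations of $\phi^{(n)}$, with no need to justify termwise differentiation of an infinite series in the parameter $x$ (which your argument does require, though it is routine here because the $H_n$ are polynomials and the series converges locally uniformly). Your backup derivation of $H_{n+1}=xH_n-H_n'$ by differentiating $\phi^{(n)}=(-1)^nH_n\phi$ is essentially the paper's proof of the first equality in (ii), so the two treatments meet there. One small point worth making explicit in a final write-up: the step $\phi(t-x)=\phi(x-t)$ uses the evenness of $\phi$, and the coefficient-matching steps should note the boundary case $n=0$ where the $nH_{n-1}$ term is absent.
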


\begin{proof} (i) It can be seen that $\phi$ satisfies  the recurrence relation, for $n \geq 1$,
	\begin{align} \label{216}
		\phi^{(n)}= - \displaystyle \Big[x\phi^{(n-1)}+ (n-1)\phi^{(n-2)}\Big].
	\end{align}
	
	\noindent  Using the above relation, we get
	\begin{align*}
		(-1)^n \phi^{(n)}=& (-1)^{n+1 }\displaystyle \Big[x\phi^{(n-1)}+ (n-1)\phi^{(n-2)}\Big] \\	
		=& \Big[(-1)^{2}(-1)^{n-1}x\phi^{(n-1)}+ (-1)^{3}(-1)^{n-2} (n-1)\phi^{(n-2)}\Big]\\
		= & \left[(-1)^{n-1}x\phi^{(n-1)}-(-1)^{n-2} (n-1)\phi^{(n-2)}    \right]
	\end{align*}
	which when divided by $\phi$ leads to, for $n \geq 2$,
	\begin{align} \label{217}
		H_{n}= \displaystyle \Big[ H_1 H_{n-1}-(n-1)H_{n-2}\Big].
	\end{align}
or equivalently, we have for $ n \geq 1$,	
\begin{align} \label{218}
H_{n+1}= \displaystyle \Big[ H_1 H_{n}- n H_{n-1}\Big].
\end{align}	
	 (ii). From \eqref{212},
 \begin{align*} 
H_n^{'}(x)=& (-1)^n \Big[\frac{\phi^{(n+1)}(x)}{\phi (x)}- \frac{\phi^{(n)}(x)}{\phi^2 (x)} \phi^{'}(x)\Big] \nonumber \\
=& (-1)^n \Big[\frac{\phi^{(n+1)}(x)}{\phi(x)}+x \frac{\phi^{(n)}(x)}{\phi(x)} \Big]   \nonumber \\
=&   H_1(x) H_n(x)-H_{n+1}(x).
\end{align*}

\nin Thus, we get
\begin{align} \label{219}
H_n^{'}
=&   H_1 H_n-H_{n+1}.
\end{align}
Also, using \eqref{213} and \eqref{219},
\begin{align} \label{210}
H_n^{'}
  =&  H_1 H_n-\{H_1H_n-nH_{n-1} \} \nonumber \\
  =& nH_{n-1}.
\end{align}

\nin (iii) The Taylor series expansion of $\phi(t)$ about $x$ 
leads to
\begin{align*}
\phi(t-x)=& \phi(x)-t \phi^{(1)}(x)+ \frac{t^2}{2!}\phi^{(2)}(x) + \cdots +
(-1)^n \frac{t^n}{n!}\phi^{(n)}(x)+ \cdots\\
       =& \sum_{n=0}^{\infty}(-1)^n \frac{t^n}{n!}\phi^{(n)}(x). 
\end{align*}

\nin Hence,
\begin{align*}
\frac{\phi(t-x_)}{\phi(x)}=& 
 \sum_{n=0}^{\infty}(-1)^n \frac{t^n}{n!}\frac{\phi^{(n)}(x)}{\phi(x)} \hspace{6cm} \\
 =& \sum_{n=0}^{\infty} \frac{t^n}{n!} H_n(x)\\
=& G_{H}(t,x),
\end{align*}
as claimed.
\end{proof}

\noindent We start with a result that connects $H_n$ with the standard normal
variate $N$.
\begin{lemma} \label{nlem22} let $N$ be a standard normal variate and $
	H_n(x)$ be the $n$-th degree  Hermite polynomial.
	Then, for $ n \geq 1$,
\begin{align} \label{211}
   H_n= E(H_1+ iN)^n,
\end{align}
where $ i= \sqrt{-1}$ and $E$ denotes the expectation operator.
\end{lemma}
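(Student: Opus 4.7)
The plan is to extract the identity by comparing two closed forms for the exponential generating function of $H_n$. First I would simplify the generating function from Lemma \ref{lem21}(iii) directly: since $\phi(y) = (2\pi)^{-1/2} e^{-y^2/2}$, a one-line algebraic computation gives
\begin{align*}
G_H(t,x) \;=\; \frac{\phi(t-x)}{\phi(x)} \;=\; e^{tx - t^2/2}.
\end{align*}
This is the ``physicists' generating function'' form, and it will be the bridge to the probabilistic side.

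Next I would evaluate the right-hand side of \eqref{211} through its own exponential generating function. Using the characteristic function of $N \sim N(0,1)$, namely $E[e^{itN}] = e^{-t^2/2}$, one gets
\begin{align*}
E\bigl[e^{t(x + iN)}\bigr] \;=\; e^{tx}\, E\bigl[e^{itN}\bigr] \;=\; e^{tx - t^2/2}.
\end{align*}
So the two generating functions agree.

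To conclude, I would expand the left-hand side as $e^{t(x+iN)} = \sum_{n \geq 0} \frac{t^n}{n!}(x+iN)^n$ and interchange expectation with the infinite sum. This interchange is painless: $|(x+iN)^n| \leq (|x| + |N|)^n$, so by Fubini/Tonelli applied to $\sum \frac{|t|^n}{n!}(|x|+|N|)^n = e^{|t|(|x|+|N|)}$, which has finite expectation since $N$ is Gaussian, the interchange is valid for every $t \in \mathbb{R}$. Equating coefficients of $t^n/n!$ in
\begin{align*}
\sum_{n=0}^{\infty} \frac{t^n}{n!} H_n(x) \;=\; G_H(t,x) \;=\; e^{tx - t^2/2} \;=\; \sum_{n=0}^{\infty} \frac{t^n}{n!}\, E\bigl[(x + iN)^n\bigr],
\end{align*}
and recalling that $H_1(x) = x$, yields $H_n(x) = E(H_1 + iN)^n$.

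No step here is really an obstacle: the simplification of $\phi(t-x)/\phi(x)$ is routine, the characteristic function is standard, and the interchange is dominated by an integrable envelope. The only conceptual point to flag is that the identity is naturally a statement about \emph{formal} power series in $t$ that happens to converge, so if one prefers one may instead differentiate both generating functions $n$ times at $t=0$ and avoid any convergence discussion altogether.
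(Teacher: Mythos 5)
Your proposal is correct and follows essentially the same route as the paper: both identify $E(x+iN)^n$ with $H_n(x)$ by computing the exponential generating function of the former via the characteristic function $E[e^{itN}]=e^{-t^2/2}$ and matching it with $G_H(t,x)=\phi(t-x)/\phi(x)=e^{tx-t^2/2}$ from Lemma \ref{lem21}(iii). The only difference is that you explicitly justify the interchange of expectation and summation by domination with $e^{|t|(|x|+|N|)}$, a point the paper passes over silently.
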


\begin{proof}
 \noindent Let $K_n(x)= E(H_1(x)+ iN)^n=E(x+iN)^n $ and $\phi_X(t)$ denote the characteristic 
function of the rv $X.$ Then the exponential generating function of $K_n(x)$ is, for $t>0$, 
\begin{align*}
	G_K{}(t,x)=\sum_{n=0}^{\infty} \frac{t^n}{n!}K_n(x)
	=&\sum_{n=0}^{\infty} \frac{t^n}{n!}E(x+iN)^n   \hspace{4cm} \\
	=&\sum_{n=0}^{\infty} \frac{t^n}{n!} \int_{-\infty}^{\infty} (x+iy)^n \phi(y)dy\\
	=& 	\int_{-\infty}^{\infty} \sum_{n=0}^{\infty} \frac{(tx+ity)^n}{n!} \phi(y)dy\\	
	=& \int_{-\infty}^{\infty} e^{(tx+ity)} \phi(y)dy\\
	=& e^{tx} \phi_N(t)\\
	=& e^{tx-t^2/2}\\
	=& \frac{\phi(t-x)}{\phi(x)},
\end{align*}
which is the exponential generating function of $H_n(x)$ (see \eqref{215}).

\nin As the generating functions of $H_n(x)$ and $K_n(x)$
agree for all $t >0$, we have $H_n(x)=K_n(x)$ for all $x \in R.$
\end{proof}

\nin An application of Lemma \ref{nlem22} shows that
\begin{align} \label{212a}
H_n(0)= E(iN)^n= ( -1)^{\frac{n}{2}}E(N)^n =
\begin{cases}
\ds 0,  &~\text{if}~ n~  \text{is ~odd},\\ 
\ds \frac{(-1)^{\frac{n}{2}} (n)!}{2^{\frac{n}{2}}(\frac{n}{2})!}, &~\text{if}~ n~  \text{is ~even}.
\end{cases}
\end{align}

\section {Divergence Operator in one-dimensional Case}
\vspace*{.3cm}
\noindent First we introduce some  notations and definitions (see  NP (2012), Chapter 1). Let $L^{2}(\phi)$ denote the set of square integrable functions with respect to the standard normal density $\phi$. Let  $ S \subset C^{\infty}
({R})$  denote the set of real-valued functions that have, together
with all their derivatives, polynomial growth.

\noindent The $n$-th divergence operator  $\delta^{n}: Dom (\delta^{n})
\to L^{2}(\phi) $ is such that for each  $ g \in Dom (\delta^{n}) $, the function $\delta^n g$ satisfies
\begin{align}\label{311}
\int f^{(n)}(x) g(x) \phi (x) dx= \int f(x) (\delta^{n} g)(x) \phi (x) dx, ~\mbox{for ~ all}~
f \in S,
\end{align}
where $ Dom (\delta^{n}) \subset  L^{2}(\phi) $ is defined by
\begin{align*}
Dom (\delta^{n})=&\Big \{g \in  L^{2}(\phi):  \int f^{n}(x)g(x) \phi(x)dx \leq c \sqrt{\int f^2(x) \phi(x)dx }\Big \}, 
\end{align*}
for  all $ f \in S$ and some constant $c = c(g)>0$.

\begin{remark}\label{rem31} \em{
	{\bf Notational Convention:} In the rest of the paper, the symbol $H$ stands for the generic Hermite polynomial. Also, we will follow
	the following rule for the binomial theorem expansion of $(H-g)^n $. We denote
	$H^0=H_0=1, H^k= H_k, k \geq 1,$ and for the real-valued function $g$, we denote
	$ g^0=g, g^k= g^{(k)}, k \geq 1,$ the  $k$-th derivative of $g$.	
 For example,
  \begin{align*}
 (H-g)^1= (H^1g^0-H^0g^1)=(H_1g-H_0g^{1})=(H_1g-g^{1}),
 \end{align*}
 and for $ x \in R$,
 \begin{align*}
   (H-g)^1(x)=(H_1g-g^{1})(x)=H_1(x)g(x)-g^{1}(x).
 \end{align*}
}
\end{remark}

\begin{remark} \em{ In fact, \eqref{311} holds for  $ f \in D^{n,2}$, where $  D^{n,2}
\supset S $ is a Sobolev space defined appropriately, see (1.1.6) in NP (2012). That is,  $ D^{n,2}$ is the closure of $S$ with respect to the norm
 \begin{align*}
\left\| f \right\|_{ D^{n,2}}	= \Big(\int_{R}|f(x)|^2 \phi(x)dx + \int_{R}|f^{'}(x)|^2 \phi(x)dx +  \cdots +  \int_{R}|f^{(n)}(x)|^2 \phi(x)dx \Big)^{1/2}.
\end{align*}
}
\end{remark}

\noindent It follows that $\delta^{0}g=g, g \in Dom (\delta^{0})= L^{2}(\phi). $ Also, it follows, from equation (1.2.4) of NP (2012), that if $ f \in Dom (\delta^{n+m})$, then $ \delta^{n} f
\in Dom (\delta^{m})$ and 
\begin{align} \label{312}
\delta^{n+m}f= \delta^{m} (\delta^n  f).
\end{align}
\nin It is known  (see equation (1.2.5) of NP (2012)) that the operator $\delta^ 1$ satisfies 
\begin{align} \label{313}
\delta^{1}g=\delta g = xg- g^{1}= H_1g-g^1, ~~ g \in D^{1,2},
\end{align}
since $H_1(x)=x$. Also, we wirte, for example, 
$\delta^{1}g$  as
\begin{align} \label{313a}
(\delta^1 g)(x) =  (H_1g-g^1)(x)= H_1(x)g(x)-g^1(x), ~~ x \in {R}.
\end{align} 
	
\noindent Using the result in \eqref{313}, one could obtain the expression for $\delta^{2}g $ which would involve $g$, $g^{1}$ and $g^{2}$ .   However, proceeding this way and  obtaining a general expression for $\delta^n g$ will be very complicated.
Further, an explicit formula for $ \delta^n$, as in \eqref{313} for the case $n=1$, is not 
available in the literature. 
 Our goal in this paper is to obtain some compact representations  and explicit computational formulas for
 $ \delta^n.$ First, we obtain it   for the  one-dimensional case, exploiting the connections to Hermite polynomials. We then  extend
 some of these results to case of isonormal Gaussian space.
 In the process, we generalize some of the known results for the divergence operator in the isonormal Gaussian space. \\

\noindent First, we state and prove the main result of this section. This result
provides a simple and compact representation formula for  $ \delta^n$ and its proof is combinatorial in nature.
\begin{theorem}\label{thm31}
Let  $\delta^n$ be the $n$-th divergence operator
defined in \eqref{311} and $g \in {S}$. Then
for $ n \geq 1$, we have
\begin{align} \label{314}
\delta^{n}g= (H-g)^n,
\end{align}
where the right-hand side is expanded using binomial theorem using the   rule
in Remark \ref{rem31}. 
\end{theorem}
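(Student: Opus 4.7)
The plan is to verify the defining duality \eqref{311} directly: for every $f\in S$, show
\[
\int_R f^{(n)}(x)\,g(x)\,\phi(x)\,dx \;=\; \int_R f(x)\,(H-g)^n(x)\,\phi(x)\,dx,
\]
and then invoke uniqueness of the adjoint in $L^2(\phi)$ to conclude $\delta^n g=(H-g)^n$. Because $g\in S$ every derivative $g^{(k)}$ is of polynomial growth, so against the Gaussian weight $\phi$ every integral below converges absolutely and every boundary contribution at $\pm\infty$ disappears.

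First I would integrate by parts $n$ times on the left, transferring each derivative from $f$ onto the product $g\phi$ and accumulating the global sign $(-1)^n$:
\[
\int_R f^{(n)}\,g\,\phi\,dx \;=\; (-1)^n\int_R f\,(g\phi)^{(n)}\,dx.
\]
Then Leibniz gives $(g\phi)^{(n)}=\sum_{k=0}^n\binom{n}{k}g^{(k)}\phi^{(n-k)}$, and the identity $\phi^{(m)}=(-1)^m H_m\phi$, which is immediate from \eqref{212}, lets me substitute. The two sign factors combine as $(-1)^n(-1)^{n-k}=(-1)^k$, producing
\[
\int_R f^{(n)}\,g\,\phi\,dx \;=\; \int_R f(x)\sum_{k=0}^n\binom{n}{k}(-1)^k H_{n-k}(x)\,g^{(k)}(x)\,\phi(x)\,dx.
\]
By the binomial convention fixed in Remark \ref{rem31}, the bracketed sum is precisely $(H-g)^n(x)$, which is the claim.

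The main obstacle is essentially bookkeeping: one must confirm that each of the $n$ boundary terms vanishes (handled by $f,g\in S$ and Gaussian tails) and that $(H-g)^n$ actually lies in $L^2(\phi)$ so that it legitimately represents $\delta^n g$; both reduce to polynomial growth of the $g^{(k)}$ combined with finiteness of every Gaussian moment of the Hermite polynomials.

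A combinatorially flavoured alternative would be induction on $n$: the base case is \eqref{313}, and the induction step uses $\delta^{n+1}=\delta\circ\delta^n$ together with $H_1 H_m=H_{m+1}+mH_{m-1}$ from \eqref{213} and $H_m'=mH_{m-1}$ from \eqref{214} to rewrite $H_1(H-g)^n-\tfrac{d}{dx}(H-g)^n$ as $(H-g)^{n+1}$ after applying Pascal's rule $\binom{n}{k}+\binom{n}{k-1}=\binom{n+1}{k}$. This second route exposes the combinatorial character of the identity more directly, at the cost of a longer computation.
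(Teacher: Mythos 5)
Your primary argument is correct, and it takes a genuinely different route from the paper. The paper proves \eqref{314} by induction on $n$: the base case is the known formula \eqref{313}, and the inductive step uses the composition rule $\delta^{n+1}g=\delta(\delta^n g)$ from \eqref{312} together with $H_1H_m-H_m'=H_{m+1}$, $H_m'=mH_{m-1}$ and Pascal's rule to reassemble $H_1(H-g)^n-\bigl((H-g)^n\bigr)'$ into $(H-g)^{n+1}$ --- exactly the ``combinatorially flavoured alternative'' you sketch at the end. Your main proof instead verifies the defining duality \eqref{311} directly: $n$-fold integration by parts gives $(-1)^n\int f\,(g\phi)^{(n)}dx$, Leibniz expands $(g\phi)^{(n)}$, and the Rodrigues identity $\phi^{(m)}=(-1)^mH_m\phi$ from \eqref{212} turns the sum into $\sum_{k=0}^n(-1)^k\binom{n}{k}H_{n-k}g^{(k)}\phi=(H-g)^n\phi$, which matches the convention of Remark \ref{rem31}. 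This buys you something the paper's proof does not: you never need to import \eqref{312} and \eqref{313} from the literature as black boxes, and you simultaneously verify that $g\in Dom(\delta^n)$ (via Cauchy--Schwarz once $(H-g)^n\in L^2(\phi)$ is noted), whereas the paper's induction silently assumes membership in the relevant domains at each stage. The cost is the analytic bookkeeping you acknowledge --- vanishing of the $n$ boundary terms and density of $S$ in $L^2(\phi)$ so that the representing element is unique --- but both points are routine for Gaussian weights and functions of polynomial growth. Either route is acceptable; yours is arguably the more self-contained.
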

\begin{proof} Note first  when $n=1$, we get from \eqref{314}, $ \delta^1 g= (H-g)^1= H_1g-H_0g^1$, which
	coincides with \eqref{313}.
Consider the case $n=2$. Using \eqref{312}, we get
\begin{align*}
\delta^{2}g=& \delta(\delta^1 g )\\
=&\delta \left(H_1g- g^1\right)\\
=& H_1\left(H_1g- g^1 \right)-\left(H_1g- g^1\right)^{1}  \hspace{0.4cm}( \text{using~ \eqref{313}})\\
= &(H_1^2-H_1^{'})g-2H_1g^1+g^2\\
=& H_2g-2H_1g^{1}+g^{2} \hspace{0.4cm} (\text{using~ \eqref{219}}) \\
=& (H-g)^2.
\end{align*}
which coincides with \eqref{314}.

\nin Assume  
$\delta^{k}g= (H-g)^k,$ for $ 1 \leq k \leq n$. We next
show that the result in \eqref{314} is true for $n+1$. 
Observe that
\begin{align*}
{\delta}^{n+1}g= & \delta (\delta^n g)\\
=&\delta(H-g)^n \\
=&\delta \Bigg(H_n g - \binom{n}{1}H_{n-1} g^1 + \cdots+ (-1)^k \binom{n}{k}H_{n-k} g^k
+\cdots \\
& \hspace{2cm}+ (-1)^{n-1} \binom{n}{n-1}H_{1} g^{n-1}+ (-1)^n g^n \Bigg)\\
=&H_1\Bigg(H_n g - \binom{n}{1}H_{n-1} g^1 + \cdots+ (-1)^k \binom{n}{k}H_{n-k} g^k
+\cdots \\
&\hspace{2cm} + (-1)^{n-1} \binom{n}{n-1}H_{1} g^{n-1}+ (-1)^n g^n \Bigg)\\
& -\Bigg(H_n g - \binom{n}{1}H_{n-1} g^1 + \cdots+ (-1)^k \binom{n}{1}H_{n-k} g^k
+\cdots  \\
&\hspace{2cm}+ (-1)^{n-1} \binom{n}{n-1}H_{1} g^{n-1}+ (-1)^n g^n \Bigg)^{'}\\
=& (H_1H_n-H_n^{'})g-\Bigg(\binom{n}{1}(H_1H_{n-1}-H_{n-1}{'})+H_n \Bigg)g^1 + \cdots  \\
& \hspace{2cm} +  (-1)^k \Bigg(\binom{n}{k}(H_1H_{n-1}-H_{n-1}{'})+\binom{n}{k-1}H_{n-k+1} \Bigg)g^k + \cdots \\
 & \hspace{2cm} +(-1)^n\Big(H_1+ (-1)^{n-1}n H_1 \Big)g^n + (-1)^{n+1}g^{n+1} \\
=& H_{n+1}g-\Bigg(\binom{n}{1}(H_1H_{n-1}-H_{n-1}^{'})+H_n \Bigg)g^1 + \cdots  \\
 & \hspace{2cm} +  (-1)^k \Bigg(\binom{n}{k}(H_1H_{n-k}-H_{n-k}^{'})+\binom{n}{k-1}H_{n-k+1} \Bigg)g^k + \cdots \\
& \hspace{2cm}  +(-1)^n\Big(H_1+ n H_1 \Big)g^n + (-1)^{n+1}g^{n+1} \\
=& H_{n+1}g-\Big(\binom{n}{1}H_n+H_n \Big)g^1+ \cdots+  \\  
& \hspace{2cm}  +  (-1)^k \Bigg(\binom{n}{k}H_{n-k+1} +\binom{n}{k-1}H_{n-k+1} \Bigg)g^k + \cdots \\
& \hspace{2cm} +(-1)^n(n+1)H_1g^n + (-1)^{n+1}g^{n+1} \\
=& H_{n+1}g-\binom{n+1}{1}H_n g^1 
+ \cdots +  (-1)^k \binom{n+1}{k}H_{n+1-k} g^k \\
+& \cdots +(-1)^n\binom{n+1}{1}H_1g^n + (-1)^{n+1}g^{n+1} \\
=& (H-g)^{n+1}.
\end{align*}
\nin Observe that we have used the following facts 
\begin{eqnarray*}
&&\,\,(i)\,\,\, H_1H_{n}-H_n^{'}= H_{n+1}, ~~\text{(see (ii) of Lemma
\ref{lem21})} \hskip 3.5in\\
&&(ii)\,\,\,  \binom{n}{k-1} + \binom{n}{k}= \binom{n+1}{k}.
\end{eqnarray*}
in the  steps above.

\nin Thus, we get, for $n \geq 1$,
\begin{align*}
\delta^{n}g= (H-g)^n,
\end{align*}
which proves the result.
\end{proof}

\nin When $ g \equiv 1$, we have  
\begin{align*}
  \delta^n 1=  
        (H-1)^n= H_n 1-0 =H_n,
\end{align*}
which is sometimes taken as the definition of $H_n$ (see p.~13 of NP (2012)).

\nin Next, we obtain  another expression for $\delta^n$, which could be helpful for computational purposes.

\begin{lemma}
	Let  $\delta^n$ be the $n$-th degree divergence operator
	defined in \eqref{311} and $g \in \mathcal{S}$. Then
	for $ n \geq 1$, we have
	\begin{align} \label{315}
		{\delta}^{n+1}g=&  \sum_{k=0}^{n} (-1)^{n-k} \binom{n}{k}  \Big(H_{k+1}g^{n-k}
		-H_kg^{n-k+1}  \Big).
	\end{align}
\end{lemma}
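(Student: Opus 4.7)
The plan is to derive \eqref{315} as a direct consequence of Theorem \ref{thm31} combined with a single application of the first order divergence $\delta=\delta^1$. By \eqref{312} we have $\delta^{n+1}g = \delta(\delta^{n}g)$, and Theorem \ref{thm31} gives the binomial expansion
$$\delta^{n}g \;=\; (H-g)^n \;=\; \sum_{j=0}^{n}\binom{n}{j}(-1)^{j}H_{n-j}\,g^{j}.$$

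Next I apply $\delta$ term by term. Using \eqref{313} in the form $\delta(F)=H_{1}F-F^{1}$ together with the ordinary product rule for differentiation, the image of a single summand is
$$\delta\bigl(H_{n-j}\,g^{j}\bigr) \;=\; \bigl(H_{1}H_{n-j}-H_{n-j}^{1}\bigr)g^{j} \;-\; H_{n-j}\,g^{j+1}.$$
The crucial input from Section 2 is Lemma \ref{lem21}(ii), namely \eqref{219}, which identifies $H_{1}H_{n-j}-H_{n-j}^{1}=H_{n-j+1}$. Summing over $j$ yields
$$\delta^{n+1}g \;=\; \sum_{j=0}^{n}\binom{n}{j}(-1)^{j}\bigl(H_{n-j+1}\,g^{j}-H_{n-j}\,g^{j+1}\bigr).$$

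Finally, the change of summation variable $k=n-j$ (so that $(-1)^{j}=(-1)^{n-k}$ and $\binom{n}{j}=\binom{n}{k}$) rewrites the last display as the right-hand side of \eqref{315}. I do not foresee a serious obstacle; essentially the only care needed is in the reindexing, where one should verify that the boundary cases $j=0$ and $j=n$ (equivalently $k=n$ and $k=0$) match the $H_{k+1}g^{n-k}$ and $H_{k}g^{n-k+1}$ patterns in \eqref{315} without any leftover terms. An equivalent route, which I would mention only if the referee prefers a proof that avoids invoking $\delta$ again, is to apply Pascal's identity $\binom{n+1}{j}=\binom{n}{j-1}+\binom{n}{j}$ directly to the expansion $\delta^{n+1}g=(H-g)^{n+1}$ coming from Theorem \ref{thm31} and split into two sums; the same reindexing then produces \eqref{315}.
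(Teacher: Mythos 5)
Your proposal is correct and follows essentially the same route as the paper: write $\delta^{n+1}g=\delta(\delta^{n}g)$, expand $\delta^{n}g=(H-g)^{n}$ via Theorem \ref{thm31}, apply $\delta F=H_{1}F-F^{1}$ termwise with the product rule, and collapse $H_{1}H_{k}-H_{k}^{'}=H_{k+1}$ using Lemma \ref{lem21}(ii). The only cosmetic difference is that the paper indexes the binomial sum by the Hermite subscript $k$ from the start, so no reindexing step is needed.
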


\begin{proof} Using Theorem \ref{thm31} and \eqref{312}, we get
\begin{align*}
{\delta}^{n+1}g= & \delta (\delta^n g)\\
=&\delta(H-g)^n \\
=&\delta \Big(  \sum_{k=0}^{n} (-1)^{n-k} \binom{n}{k}H_kg^{n-k}  \Big)\\
=&  \sum_{k=0}^{n} (-1)^{n-k} \binom{n}{k} \delta(H_kg^{n-k}) \\  
=&  \sum_{k=0}^{n} (-1)^{n-k} \binom{n}{k} \Big( H_1H_kg^{n-k}
- H_k^{'}g^{n-k}-H_kg^{n-k+1}  \Big) \\ 
=&  \sum_{k=0}^{n} (-1)^{n-k} \binom{n}{k}  \Big(H_{k+1}g^{n-k}
-H_kg^{n-k+1}  \Big),
\end{align*}
using \eqref{214}. This proves the result.
\end{proof}
\nin Using Theorem \ref{thm31}, we get the interesting following result which
generalizes Lemma  \ref{nlem22} and also the result in  \eqref{313}. Further,  it connects 
 $\delta^n $ with the moments of the standard normal distribution.\\
\begin{theorem}
Let $N$ denote the standard normal variable, $K= H_1+iN $, $g \in \mathcal{S}$
and $E$ denote the expectation is with respect to  $N$. Then,  for $n \geq 1$,
\begin{align*}
{\delta}^{n}g =& E(K-g)^n,
\end{align*}
 where the rhs is expanded using binomial theorem and the  rule
in Remark \ref{rem31} is applied only for $g$.
\end{theorem}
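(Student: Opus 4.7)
The plan is to chain together Theorem \ref{thm31} and Lemma \ref{nlem22}: Theorem \ref{thm31} writes $\delta^n g$ as the symbolic binomial expansion $(H-g)^n$, whose coefficients are the Hermite polynomials $H_k$, and Lemma \ref{nlem22} identifies each $H_k$ with the moment $E K^k$. So the idea is to pull the expectation through the binomial sum and collapse the Hermite factors back into a single power of $K$.

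Concretely, first I would expand $(K-g)^n$ in the sense of Remark \ref{rem31}, applied only to $g$. Since $K = H_1 + iN$ is a genuine complex random variable (not subject to the symbolic convention), its powers $K^k$ are ordinary powers, while $g^{n-k}$ still means the $(n-k)$-th derivative. The binomial theorem then yields
\begin{align*}
(K-g)^n = \sum_{k=0}^{n}(-1)^{n-k}\binom{n}{k} K^k\, g^{n-k}.
\end{align*}
Taking expectation term by term (legitimate because the sum is finite) and invoking Lemma \ref{nlem22}, which gives $E K^k = H_k$, produces
\begin{align*}
E(K-g)^n = \sum_{k=0}^{n}(-1)^{n-k}\binom{n}{k} H_k\, g^{n-k}.
\end{align*}

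Finally, I would compare this with the expansion of $(H-g)^n$ under the convention of Remark \ref{rem31}, which is exactly the same sum $\sum_{k=0}^{n}(-1)^{n-k}\binom{n}{k}H_k g^{n-k}$. By Theorem \ref{thm31} this equals $\delta^n g$, so $\delta^n g = E(K-g)^n$.

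There is no real obstacle here beyond getting the notational conventions straight: one must be careful that the binomial expansion of $(K-g)^n$ treats $K$ as an ordinary (complex) variable while keeping $g^k = g^{(k)}$, and that the coincidence between the two sums relies on the identification $E K^k = H_k$ from Lemma \ref{nlem22}. Once this bookkeeping is in place, the identity is immediate.
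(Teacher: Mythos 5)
Your proposal is correct and follows essentially the same route as the paper: both arguments combine Theorem \ref{thm31} with Lemma \ref{nlem22}, expand the symbolic binomial, and exchange the finite sum with the expectation; you merely run the chain of equalities in the opposite direction (starting from $E(K-g)^n$ rather than from $\delta^n g$).
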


\begin{proof}
It follows, from Lemma \ref{nlem22}, that
$H_n= E(K^n)$, for $ n \geq 1.$ Also, by Theorem \ref{thm31}, we have for $n \geq 1$,
 \begin{align*}
 {\delta}^{n}g
 =&  \sum_{r=0}^{n} (-1)^{n-r} \binom{n}{r}H_rg^{n-r} \\
 =&  \sum_{k=0}^{n} (-1)^{n-r} \binom{n}{r} E(K^r)g^{n-r} \\  
 =& E \Big( \sum_{k=0}^{n} (-1)^{n-r} \binom{n}{r} K^r g^{n-r} \Big) \\
 =& E(K-g)^n,
 \end{align*}
which completes the proof.
\end{proof}

\begin{remark} {\em
 (i) When $g \equiv 1$, we have $ \delta^n= E(K-1)^n= E(K^ng^0-g^1+ \cdots)=E(K^n)=H_n$, which is Lemma  \ref{nlem22}.

(ii) When $n=1$, $\delta^1g= E(K-g)^1= E(K^1g^0-g^1)= E\left((H_1+iN)g- g^1\right)= H_1g-g^{1}$, which coincides with \eqref{313}. }
\end{remark}

\nin  Next, we give a different proof of the result in (1.3.5)  
of NP (2012, p.~12), using Theorem \ref{thm31}.
\begin{lemma} \label{lem32}
Let $D$ denote the usual derivative operator, $\delta^n$ be the $n$-th divergence operator and $g \in \mathcal{S}.$ Then
\begin{align} \label{316}
D {\delta}^{n}g =&  n {\delta}^{n-1}g +  {\delta}^{n}g^{1},
\end{align}
for $n \geq 1.$
\end{lemma}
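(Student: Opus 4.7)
My plan is to apply Theorem \ref{thm31} directly and then differentiate the resulting polynomial expression, relying on the identity $H_k' = kH_{k-1}$ from part (ii) of Lemma \ref{lem21}.

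First I would write, by Theorem \ref{thm31},
\begin{align*}
\delta^{n}g = (H-g)^n = \sum_{k=0}^{n} (-1)^{n-k}\binom{n}{k} H_k\, g^{n-k}.
\end{align*}
Differentiating term-by-term with the product rule and substituting $H_k' = kH_{k-1}$ gives
\begin{align*}
D\delta^{n}g = \sum_{k=0}^{n} (-1)^{n-k}\binom{n}{k} k H_{k-1}\, g^{n-k} + \sum_{k=0}^{n} (-1)^{n-k}\binom{n}{k} H_k\, g^{n-k+1}.
\end{align*}

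The second sum is exactly $\delta^{n}g^{1}$, because applying Theorem \ref{thm31} to the function $g^{1}$ gives $\delta^{n}g^{1} = \sum_{k=0}^{n} (-1)^{n-k}\binom{n}{k} H_k\, (g^{1})^{n-k}$ and under the notational convention $(g^{1})^{n-k} = g^{n-k+1}$. So that piece matches the claimed right-hand side immediately.

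For the first sum, I would reindex by $j = k-1$ (the $k=0$ term vanishes), obtaining
\begin{align*}
\sum_{k=1}^{n} (-1)^{n-k}\binom{n}{k} k H_{k-1}\, g^{n-k} = \sum_{j=0}^{n-1} (-1)^{n-1-j}\binom{n}{j+1}(j+1) H_j\, g^{n-1-j},
\end{align*}
and then apply the elementary identity $(j+1)\binom{n}{j+1} = n\binom{n-1}{j}$ to factor out $n$ and collapse this to $n(H-g)^{n-1} = n\,\delta^{n-1}g$, again by Theorem \ref{thm31}. Adding the two pieces yields \eqref{316}.

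The only real obstacle is bookkeeping: making sure the boundary terms ($k=0$ in the first sum, $k=n$ in the derivative contributing $g^{n+1}$) are handled correctly so that the two sums recombine as $n\delta^{n-1}g + \delta^n g^1$ with no stray terms. Everything else is a clean consequence of Theorem \ref{thm31} and Lemma \ref{lem21}(ii).
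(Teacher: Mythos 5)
Your proposal is correct and follows essentially the same route as the paper's own proof: expand $\delta^n g$ via Theorem \ref{thm31}, differentiate term-by-term, substitute $H_k' = kH_{k-1}$, identify the product-rule sum with $\delta^n g^1$, and reindex the other sum using $(j+1)\binom{n}{j+1} = n\binom{n-1}{j}$ to obtain $n\delta^{n-1}g$. The bookkeeping you flag (the vanishing $k=0$ term and the $(g^1)^{n-k}=g^{n-k+1}$ convention) is handled identically in the paper.
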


\begin{proof} Using Theorem \ref{thm31}, we get
	\begin{align*}
	D{\delta}^{n}g =& D(H-g)^n \\
	=& D \Big(  \sum_{k=0}^{n} (-1)^{n-k} \binom{n}{k}H_kg^{n-k}  \Big)\\
   =&  \sum_{k=0}^{n} (-1)^{n-k} \binom{n}{k} D \left (H_kg^{n-k}\right) \\
   =&  \sum_{k=0}^{n} (-1)^{n-k} \binom{n}{k}  \left(H_k^{'}g^{n-k}+ H_kg^{n-k+1} \right) \\
   =&  \sum_{k=0}^{n} (-1)^{n-k} \binom{n}{k}  H_k^{'}g^{n-k}+ \sum_{k=0}^{n} (-1)^{n-k} \binom{n}{k} H_k  (g^{1})^{n-k} \\
  =&  \sum_{k=0}^{n} (-1)^{n-k} \binom{n}{k} k H_{k-1}g^{n-k}+  \delta^{n} g^{1} \hspace{.4cm} (\text{using}~ \eqref{210})\\
  = & n \sum_{j=0}^{n-1} (-1)^{n-j-1} \binom{n-1}{j}  H_{j}g^{n-1-j}+  \delta^{n} g^{1} \\
   = & n \delta^{n-1} g + \delta^{n} g^{1},
\end{align*}
which proves the result.
\end{proof}

\begin{remark}  \em When $n=1$, Lemma \ref{lem32} reduces to $D\delta g= g + \delta g^1$, which is Proposition 1.3.8 of NP (2012). Also, the formula in \eqref{316} can also
	be written as
\begin{align} \label{317}
\left(D {\delta}^{n} -{\delta}^{n}D\right) g =&  n {\delta}^{n-1}g 
\end{align}	
which is equation (1.3.5) of NP (2012, p.~12). Note the form in \eqref{316}  is similar to the derivative of the product of two functions. 
\end{remark}

\vspace{.2cm}
\section {Divergence Operator in  Isonormal Gaussian space}

Let $(\Omega, \mathcal{F}, P) $ be the underlying probability space, $ \mathcal{H}$ be a real separable Hilbert space and  $X=\{ X(h): h \in \mathcal{H} \}$
be the associated isonormal Gaussian process over $ \mathcal{H}.$ That is,
the   $  X(h)$ are zero mean normal rvs defined on
$ (\Omega, \mathcal{F}, P)$ such that  $ E(X(h) X(g))= \langle h, g \rangle_{\mathcal{H}}$, for  $ h, g \in \mathcal{H}.$
Assume $\mathcal{F} $ is generated by $X$ and let $L^2 (\Omega) = L^2 (\Omega, \mathcal{F}, P).$ It follows that $X: \mathcal{H} \to  L^2 (\Omega) $ is a
liner operator.

\vspace{.2cm}
Let  $f: \mathbb{R}^m \to \mathbb{R}$ be a $C^{\infty}$-function such that its partial derivatives have polynomial growth.  A random variable 
 $f(X(h_1), \cdots,  X(h_m))$,  $ h_i \in \mathcal{H}$, is called a smooth rv. Let $ \mathcal{S} $ be a collection  of all  smooth rvs.

\noindent Let  $ F \in \mathcal{S}$ be a smooth rv. The $n$-th Malliavin
derivative $D^n$ of $F$ is defined as 
\begin{align} \label{b41}	
 D^n F= \sum_{j_1, \cdots, j_n=1}^{m} \frac{\partial^n}{\partial{j_1}, \cdots,
 \partial{j_n}} f(X(h_1), \cdots,  X(h_m)) h_1 \otimes \cdots \otimes h_n
\end{align}

\noindent  Note when $n=1$, 
\begin{align}\label{a41}	
D F= \sum_{j=1}^{m} \frac{\partial}{\partial{j}} f(X(h_1), \cdots,  X(h_m)) h_j.
\end{align}	

\noindent  The $n$-th divergence operator  $\delta ^n :Dom(\delta^n) \to  L^2(\Omega)$
satisfy,  for each $u \in Dom(\delta^n)$,
	\begin{align*}	
E \Big( \big\langle D^n F, u   \big\rangle_{\mathcal{H}^{\otimes n}} 
	\Big)=E(F \delta^n(u)),~~\text{for ~all~}F \in \mathcal{S},
	\end{align*}	
where $Dom(\delta^n) \subset  L^2(\Omega, {\mathcal{H}^{\otimes n}})$ is defined by
\begin{align*}	
Dom(\delta^n)= \{ u \in L^2(\Omega, \mathcal{H}^{\otimes n}) | \langle D^nF, u  \rangle_{\mathcal{H}^{\otimes n}} \leq c \sqrt{E(F^2)}\},
\end{align*}
for some $c= c(u)>0$ and all $F \in \mathcal{S}$.
\noindent For more details and properties of $D^n$ and $\delta^n $, see NP (2012). 

\noindent In Section 3, the derivative and the divergence operators are acting 
on one dimensional deterministic functions, that is, for the case $\mathcal{H}=
\mathcal{R}$ , while in this section the domain of those operators is a collection
of random elements. However, we will continue to use the same notations and this
won't create a difficulty.

\noindent The next result gives the divergence of a Hilbert space valued rv.  If no confusion arises, we write sometimes $ \delta^n g(x)$ for  
$ (\delta^n g )(x)$.

\begin{lemma} \label{lem41} Let $ \mathcal{H}$ be a real 
separable	Hilbert space and $ X: \mathcal{H} \rightarrow L^2 (\Omega)$ be  an isonormal process. Let $ h \in \mathcal{H}$ be such that  $\left\| h \right\|=1$ and $ g: \mathcal{R} \to \mathcal{R} $ be a function so that $g(X(h))$ is a smooth rv. Then
\begin{align} \label{411a}
  \delta (g(X(h))h)=& (H-g)^{1}(X(h)),
\end{align}	
\end{lemma}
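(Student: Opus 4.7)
The plan is to verify the identity through the characterizing adjoint relation of $\delta$: it suffices to show that, for every smooth random variable $F = f(X(h_1),\dots,X(h_m)) \in \mathcal{S}$,
$$E\bigl[\langle DF,\,g(X(h))h\rangle_{\mathcal{H}}\bigr] = E\bigl[F\cdot (H-g)^1(X(h))\bigr],$$
from which the identification $\delta(g(X(h))h) = (H-g)^1(X(h))$ follows by uniqueness of the divergence.

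My first step is to put the computation into an orthonormal frame so that the one-dimensional calculus of Section 3 applies coordinatewise. Adjoining $h$ to $\{h_1,\dots,h_m\}$ and Gram--Schmidt orthonormalizing the resulting finite-dimensional subspace of $\mathcal{H}$, starting with $h$ itself, produces an orthonormal tuple $(e_1,\dots,e_k)$ with $e_1 = h$. Each original $h_i$ is a linear combination of the $e_j$'s, so $F$ can be rewritten as $F = \tilde f(X(e_1),\dots,X(e_k))$ for a smooth $\tilde f$ whose partial derivatives inherit polynomial growth, and $(X(e_1),\dots,X(e_k))$ is a standard Gaussian vector in $\mathbb{R}^k$. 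Applying the chain-rule formula \eqref{a41} for the Malliavin derivative and using $\langle e_j, h\rangle_{\mathcal{H}} = 1$ if $j=1$ and $0$ otherwise, the inner product collapses to
$$\langle DF,\,g(X(h))h\rangle_{\mathcal{H}} = g(X(e_1))\,\frac{\partial \tilde f}{\partial x_1}(X(e_1),\dots,X(e_k)).$$

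My second step is to take expectation, write it as an integral against the product density $\phi(x_1)\cdots\phi(x_k)$, and apply the one-dimensional divergence identity \eqref{311} on the inner $x_1$-integral with the other coordinates frozen, using $g$ in the role of the divergence candidate. This converts $g(x_1)\,\partial_{x_1}\tilde f$ into $\tilde f\cdot(\delta^1 g)(x_1)$. Since \eqref{313}, equivalently the $n=1$ case of Theorem \ref{thm31}, gives $\delta^1 g = H_1 g - g^1 = (H-g)^1$, reassembling the multiple integral returns $E[F\cdot(H-g)^1(X(h))]$, as required. The main obstacle is the bookkeeping of the Gram--Schmidt reduction: one must check that $\tilde f$ sits in the admissible smooth class (polynomial growth of partial derivatives being preserved under invertible linear changes of variable) and that adjoining $h$ to the spanning family does not alter $F$ (which holds by linearity of $X$). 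Once the reduction is in place, the result is a direct transfer of the one-dimensional duality to the isonormal setting.
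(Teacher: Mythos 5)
Your proof is correct, but it takes a genuinely different route from the paper's. The paper proves Lemma \ref{lem41} in three lines by invoking the product rule for the divergence (Proposition 2.5.4 of NP (2012)), namely $\delta(Fu)=F\delta(u)-\langle DF,u\rangle_{\mathcal{H}}$, together with $\delta(h)=X(h)$ and $\|h\|_{\mathcal{H}}=1$; the identity $(H-g)^1(X(h))=H_1(X(h))g(X(h))-g^{1}(X(h))$ then drops out immediately. You instead verify the defining duality $E[\langle DF, g(X(h))h\rangle_{\mathcal{H}}]=E[F\cdot(H-g)^1(X(h))]$ for every smooth $F$, by Gram--Schmidt reduction to an orthonormal frame with $e_1=h$ and an application of the one-dimensional relation \eqref{311}--\eqref{313} in the $x_1$-variable with the remaining coordinates frozen. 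Both arguments are sound. The paper's is shorter but leans on the integration-by-parts formula for $\delta$ as a black box; yours is more self-contained in that it reduces the isonormal statement directly to the one-dimensional calculus of Section 3 (which is in fact the same mechanism the paper uses later, in Lemma \ref{lem42} and Theorem \ref{thm41}, to pass from dimension one to the general setting). Two small points you implicitly rely on and could state: the element $\delta(u)$ is uniquely determined by the duality because $\mathcal{S}$ is dense in $L^2(\Omega)$ (here $\mathcal{F}$ is generated by $X$), and for each frozen $(x_2,\dots,x_k)$ the section $x_1\mapsto \tilde f(x_1,x_2,\dots,x_k)$ lies in the class $S$ of Section 3 so that \eqref{311} applies. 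Neither is a gap, just bookkeeping worth recording.
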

\noindent using the rule in  Remark \ref{rem31}.

\begin{proof}
Let $D$ be the Malliavin derivative defined in  \eqref{a41}. Recall that
$H_0(x)=1$ and $H_1(x)=x$.  Using  Proposition 2.5.4 of NP (2012), we have
\begin{align*}
\delta(g(X(h))h)= & g(X(h)) \delta(h)- \langle Dg(X(h)), h \rangle_{\mathcal{H}}\\
     = &  g(X(h)) X(h)-\langle g^{1}(X(h))h, h \rangle_{\mathcal{H}}\\
      =& g(X(h)) X(h)- g^{1}(X(h)) \langle h,  h \rangle_{\mathcal{H}}\\
      =& H_1(X(h)) g(X(h)) -H_0(X(h)) g^{1}(X(h))\\
      =& (H-g)^1(X(h)),
\end{align*} 
which proves the result. \end{proof}

\begin{remark} \label{rem41} \em
It follows from Theorem \ref{thm31} and  for $n=1$ that \eqref{411a} is equivalent to
\begin{align} \label{411b}
\delta (g(X(h))h)= (\delta^1 g) (X(h))=(\delta g) (X(h)),
\end{align}	
where $\delta^1=\delta $ on the rhs of \eqref{411b} is an operator on functions in the
sense of Section 3. Alternatively, $(\delta g)$ can also be viewed as an operator
on the isonormal process $ X= \{(X(h)| h \in \mathcal{H} \}$.
\end{remark}

\begin{lemma}\label{lem42}
Let, for $ n\geq 1$,  $ h_1, \ldots, h_n \in \mathcal{H}$ and $g \in \mathcal{S} $ be a smooth random variable. Then,
\begin{align} \label{412}
 \delta^{n+1} (g h_1 \otimes \cdots\otimes h_{n+1})= & \delta \Big(\delta^{n} (g h_1 \otimes \cdots\otimes h_{n})h_{n+1} \Big).
\end{align}
\end{lemma}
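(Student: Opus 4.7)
The plan is to verify the identity by pairing both sides against an arbitrary smooth test random variable $F \in \mathcal{S}$ and using the duality that defines $\delta$ and $\delta^n$. This reduces matters to an identity in $L^2(\Omega)$ after checking it holds in the weak sense against every element of a dense set, which is legitimate because $\mathcal{S}$ is dense in $L^2(\Omega)$.

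First I would apply the defining adjoint relation for $\delta^{n+1}$ to the left-hand side, obtaining
\begin{align*}
E\bigl(F\,\delta^{n+1}(g\,h_1\otimes\cdots\otimes h_{n+1})\bigr)
= E\bigl\langle D^{n+1}F,\, g\,h_1\otimes\cdots\otimes h_{n+1}\bigr\rangle_{\mathcal{H}^{\otimes(n+1)}}.
\end{align*}
Next I would expand the right-hand side of \eqref{412} by two applications of duality: first by pairing $F$ with the outer $\delta$, then by pairing the resulting scalar $G := \langle DF, h_{n+1}\rangle_{\mathcal{H}}$ with the inner $\delta^n$. This yields
\begin{align*}
E\bigl(F\,\delta(\delta^{n}(g\,h_1\otimes\cdots\otimes h_n)\,h_{n+1})\bigr)
&= E\bigl(\delta^{n}(g\,h_1\otimes\cdots\otimes h_n)\cdot G\bigr) \\
&= E\bigl\langle D^{n}G,\, g\,h_1\otimes\cdots\otimes h_n\bigr\rangle_{\mathcal{H}^{\otimes n}}.
\end{align*}

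The main step, and the only substantive one, is to identify $D^{n}G$ with the contraction of $D^{n+1}F$ in its last slot against $h_{n+1}$. For smooth $F=f(X(k_1),\ldots,X(k_m))$, the explicit formula \eqref{b41} shows that $DF$ is a linear combination of the $k_j$ with smooth-random-variable coefficients, so $G=\langle DF,h_{n+1}\rangle_{\mathcal{H}}$ remains in $\mathcal{S}$; differentiating $n$ further times and comparing with the explicit expansion of $D^{n+1}F$ gives $D^{n}G=\langle D^{n+1}F,h_{n+1}\rangle_{\mathcal{H}}$, the contraction taking place in the $(n+1)$-st tensor factor. Substituting this and factoring the scalar $g$ back into the tensor produces
\begin{align*}
E\bigl\langle D^{n}G,\, g\,h_1\otimes\cdots\otimes h_n\bigr\rangle_{\mathcal{H}^{\otimes n}}
= E\bigl\langle D^{n+1}F,\, g\,h_1\otimes\cdots\otimes h_{n+1}\bigr\rangle_{\mathcal{H}^{\otimes(n+1)}},
\end{align*}
which matches the first display. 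Since $F\in\mathcal{S}$ was arbitrary, \eqref{412} follows.

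The potential obstacle is bookkeeping rather than analysis: one has to be careful that the contraction $\langle\cdot,h_{n+1}\rangle_{\mathcal{H}}$ acts on the correct tensor slot, and that all random variables and Hilbert-space-valued elements encountered lie in the domains of the operators being applied. Working inside $\mathcal{S}$ (respectively its tensor counterpart) from the outset sidesteps all domain questions, since smooth random variables and their tensor combinations belong to every $\mathrm{Dom}(\delta^k)$ used above; a routine density/closability argument then extends the identity to the stated class.
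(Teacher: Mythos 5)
Your proposal is correct and follows essentially the same route as the paper: both arguments test against an arbitrary smooth $F$, apply duality twice on the right-hand side (for the outer $\delta$ and then for $\delta^{n}$), identify the contraction of $D^{n+1}F$ against $h_{n+1}$ in its last slot with $D^{n}$ of the smooth random variable $\langle DF,h_{n+1}\rangle_{\mathcal{H}}$, and conclude by duality for $\delta^{n+1}$ and density of $\mathcal{S}$. The only cosmetic difference is that you package the contraction as a single smooth random variable $G$, whereas the paper keeps the sum $\sum_k \langle v_k,h_{n+1}\rangle_{\mathcal{H}}\,\partial f/\partial x_k$ explicit.
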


\begin{proof} Let $ f: R^m \rightarrow R$ be a smooth function
	whose derivatives have polynomial growth and $ F= f(X(v_1), \cdots, X(v_m)),
	v_1, \cdots,v_m \in \mathcal{H}$ be a smooth random variable.   For simplicity, let us denote
$h=  h_1 \otimes \cdots\otimes h_{n} \in H^{\otimes n}$. Then by the definition of
$\delta $, we have	
\begin{align*}
E\big[F \cdot \delta (\delta^n(gh) h_{n+1}) \big]=& E \langle DF, \delta^n(gh) h_{n+1} \rangle_{\mathcal{H}}  \\
=& E \Big\langle  \sum_{k=1}^{m} \frac{\partial f}{\partial x_k} \Big(X(v_1), \cdots, X(v_m)\Big) v_k,   ~\delta^n(gh) h_{n+1} \Big\rangle_{\mathcal{H}} \\ 
& \hspace{3cm} (\mbox{definition~ of~ Malliavin derivative~} D )  \\ 
=& \sum_{k=1}^{m} \langle v_k, h_{n+1} \rangle_{\mathcal{H}} E \Big\langle D^n \frac{\partial f}{\partial x_k} (X(v_1), \cdots, X(v_m)),  ~(gh) \Big\rangle_{\mathcal{H}^{\otimes n}}  \\
 & \hspace{3cm} (\mbox{using ~ the ~duality~ between}~
 \delta^{n}  \mbox{and }  D^{n} ) \\
=&E\Big\langle \sum_{k=1}^{m} D^n \frac{\partial f}{\partial x_k} (X(v_1), \cdots, X(v_m)) \otimes v_k, gh \otimes h_{n+1} \Big\rangle_{\mathcal{H}^{\otimes {n+1}}} \\ & \hspace{3cm} (\mbox{inner~ product ~ in~} \mathcal{H}^{\otimes {n+1}}) \\
=& E \Big \langle D^{n+1}F, gh \otimes h_{n+1} \Big \rangle_{\mathcal{H}^{\otimes {n+1}}} (\mbox{definition of Malliavin derivative }
  D^{n+1} )\\
=& E \Big[F \cdot~ \delta^{n+1} (gh \otimes h_{n+1})\Big] ~ (\mbox{duality~ between}~
 D^{n+1}~\mbox{and}~ \delta^{n+1}  ) \\
  \end{align*}
Thus, for each $ F \in \mathcal{S}$,  we obtain
\begin{align*}
E\big[F. ~ \delta (\delta^n(gh_1 \otimes \cdots \otimes h_n) h_{n+1}) \big]=& E \Big[F\cdot ~ \delta^{n+1} (gh_1 \otimes \cdots \otimes h_{n+1})\Big] 
\end{align*}
and the result follows.
\end{proof}

\vspace{.3cm}
\noindent The following result generalizes Theorem 2.7.7 of NP
(2012).

\begin{theorem} \label{thm41} Let $ \mathcal{H}$ be a real 
	separable	Hilbert space and $ X: \mathcal{H} \rightarrow L^2 (\Omega)$ be  an isonormal process. Let $ h \in \mathcal{H}$ be such that  $\left\| h \right\|=1$ and  $ g: {R} \to {R} $ be a function so that $g(X(h))$ is a smooth rv. Then, for $n \geq 1$,
	\begin{align} \label{413}
\delta^n(g(X(h))h^{\otimes n})=& (H-g)^n (X(h)),
	\end{align}
\end{theorem}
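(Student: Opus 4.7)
My plan is to argue by induction on $n$, using Lemma \ref{lem41} as the base case and Lemma \ref{lem42} as the mechanism that reduces $\delta^{n+1}$ to a single application of $\delta$ acting on a function of $X(h)$ times $h$. The crucial observation is that, after one step of the recursion, the inductive hypothesis packages the intermediate object in exactly the form required to reapply Lemma \ref{lem41}, at which point Theorem \ref{thm31} from the one-dimensional theory finishes the job.

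The base case $n=1$ is Lemma \ref{lem41} itself: $\delta(g(X(h))h)=(H-g)^1(X(h))$. For the inductive step, assume \eqref{413} holds for some $n\ge 1$. Specializing Lemma \ref{lem42} to $h_1=\cdots=h_{n+1}=h$, I obtain
\begin{align*}
\delta^{n+1}\bigl(g(X(h))\,h^{\otimes(n+1)}\bigr)
=\delta\Bigl(\delta^{n}\bigl(g(X(h))\,h^{\otimes n}\bigr)\,h\Bigr).
\end{align*}
By the inductive hypothesis, $\delta^{n}(g(X(h))\,h^{\otimes n})=(H-g)^n(X(h))$, so if I introduce the one-variable function
\begin{align*}
\tilde g(x):=(H-g)^n(x),
\end{align*}
expanded according to Remark \ref{rem31}, the bracketed quantity becomes $\tilde g(X(h))\,h$, and $\tilde g$ is smooth (indeed polynomial in $x$ combined with the smooth derivatives of $g$), so $\tilde g(X(h))$ is a smooth random variable.

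Now I apply Lemma \ref{lem41} with $g$ replaced by $\tilde g$ to get
\begin{align*}
\delta\bigl(\tilde g(X(h))\,h\bigr)=(H-\tilde g)^1(X(h))=H_1(X(h))\tilde g(X(h))-\tilde g^{\,1}(X(h)).
\end{align*}
The right-hand side is exactly the one-dimensional divergence $\delta^1\tilde g$ of Section~3 evaluated at $X(h)$, by \eqref{313a}. Finally, by Theorem \ref{thm31} applied in the one-dimensional setting (and using \eqref{312} with $m=1$),
\begin{align*}
\delta^1\tilde g=\delta^1\bigl((H-g)^n\bigr)=\delta^1(\delta^n g)=\delta^{n+1}g=(H-g)^{n+1}.
\end{align*}
Evaluating at $X(h)$ completes the induction and proves \eqref{413}.

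The main obstacle I anticipate is keeping the two distinct divergence operators straight: the isonormal $\delta$ acting on $\mathcal{H}$-valued random elements, and the deterministic one-dimensional $\delta$ of Section~3 acting on functions. The whole argument hinges on the equivalence recorded in Remark \ref{rem41}, which turns Lemma \ref{lem41} into the precise bridge saying that $\delta$ acting on $\tilde g(X(h))\,h$ produces the one-dimensional $\delta^1\tilde g$ evaluated at $X(h)$. Once this identification is made, Theorem \ref{thm31} does all the combinatorial work for free, and the inductive step reduces to a one-line application of that theorem.
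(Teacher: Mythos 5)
Your proposal is correct and follows essentially the same route as the paper: induction with Lemma \ref{lem41} as the base case, Lemma \ref{lem42} to reduce $\delta^{n+1}$ to $\delta$ applied to $\bigl(\delta^{n}(g(X(h))h^{\otimes n})\bigr)h$, the identification of the intermediate object with the one-dimensional $(\delta^n g)(X(h))$ via Theorem \ref{thm31}, and then \eqref{312} together with Theorem \ref{thm31} to conclude. Your explicit introduction of $\tilde g=(H-g)^n$ and the remark that it is again a smooth function is a slightly more careful packaging of the same argument, but not a different proof.
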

\noindent using the rule in  Remark \ref{rem31}.

\begin{proof}
The case $n=1$ is proved in Lemma \ref{lem41}. Assume now \eqref{413} holds so that
 	\begin{align*} 
 \delta^n(g(X(h))h^{\otimes n})=& (H-g)^n (X(h)).
 \end{align*}
By Theorem \ref{thm31}  and Remark \ref{411b},  the above assumption  is equivalent to
\begin{align} \label{415}
\delta^n(g(X(h))h^{\otimes n})=&  (\delta^n g) (X(h)),
\end{align}
where $\delta^n$ on the rhs of \eqref{415} is an operator on the set of functions, in the
sense of Section 3

\noindent  Consider now
 \begin{align*}
 \delta^{n+1}(g(X(h))h^{\otimes (n+1)})= &\delta \Big(\delta^{n}(g(X(h))h^{\otimes n})h\Big)
  \hspace{.2cm}(\text{using Lemma \ref{lem42}})\\
      = &\delta \Big((\delta^ng) (X(h))h \Big) \hspace{.2cm} \text{(by \eqref{415})}\\
     = &  \big(\delta (\delta^n g)\big)  (X(h) \big) \hspace{.3cm} \text{(using 
     \eqref{411b}~)}\\
     = &  (\delta^{n+1}g) (X(h)) \hspace{.4cm} \text{(using \eqref{312})} \\
     = &  (H-g)^{n+1}(X(h)),  
  \end{align*}
 using Theorem \ref{thm31}. This proves the result.
\end{proof}

\begin{remark} {\em Indeed, Theorem \ref{thm41} implies, for $n \geq 1$,
	\begin{align} \label{416b}
\delta^n(g(X(h))h^{\otimes n})=& (\delta^n g) (X(h)).
\end{align}	
}
\end{remark}	

\begin{example} {\em
\vspace{.3cm}
 (i) When $g(x)=1$, we get from  \eqref{413},
\begin{align*}
\delta^n(h^{\otimes n})=& (H-1)^n (X(h))\\
              =& H_n (X(h)),
\end{align*}
which is Theorem 2.7.7 of NP (2012).

\vspace{.3cm}
\noindent (ii) Similarly,  When $ g(x)=x$, we have
\begin{align*}
\delta^n (X(h)h^{\otimes n})=& (H-x)^n (X(h))\\
=& (H_1H_n - nH_{n-1 })(X(h))\\
=& H_{n+1}(X(h)),
\end{align*}
using \eqref{213}.

\vspace{.3cm}
(iii) When $n=2$, we obtain
\begin{align*}
\delta^{2}(g(X(h))h^{\otimes 2})= &(H-g)^2{(X(h))} \\
 =& (H_2g-2H_1g^1+ g^2)(X(h)) \\
  =& (X^2(h)-1)g(X(h))- 2 X(h) g^{1}(X(h))+ g^2(X(h)),
\end{align*}
using the properties of Hermite polynomials.
}
\end{example}

\begin{corollary} Assume the conditions of Theorem \ref{thm41} hold and let $D$ be the Malliavin derivative. Then
\begin{align} \label{416}
D(\delta^n(g(X(h))h^{\otimes n}))=& \Big(n(H-g)^{n-1}+ (H-g^1)^{n}\Big)(X(h)) h.
\end{align}	
\end{corollary}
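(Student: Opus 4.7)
My plan is to combine the identity supplied by Theorem \ref{thm41} with the chain rule for the Malliavin derivative and the classical derivative formula from Lemma \ref{lem32}. By Theorem \ref{thm41} together with the equivalent form \eqref{416b}, the random variable we differentiate is
\begin{align*}
\delta^n(g(X(h))h^{\otimes n}) = (\delta^n g)(X(h)),
\end{align*}
so the problem reduces to computing $D\bigl((\delta^n g)(X(h))\bigr)$.

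Next I would apply the standard chain rule for $D$: for any sufficiently smooth $\varphi$, one has $D(\varphi(X(h))) = \varphi'(X(h))\,h$, where $\varphi'$ is the ordinary derivative (this is the one-variable case of the definition \eqref{a41}, using $DX(h)=h$ and $\|h\|=1$). Applying this with $\varphi = \delta^n g$ yields
\begin{align*}
D\bigl((\delta^n g)(X(h))\bigr) = \bigl(D(\delta^n g)\bigr)(X(h))\,h,
\end{align*}
where the $D$ on the right-hand side is the one-dimensional derivative operator from Section 3.

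Now I would invoke Lemma \ref{lem32}, which gives $D\delta^n g = n\,\delta^{n-1}g + \delta^n g^1$ as an identity of ordinary functions. Substituting, and then rewriting the two terms by Theorem \ref{thm31} as $\delta^{n-1}g = (H-g)^{n-1}$ and $\delta^n g^1 = (H-g^1)^n$, I obtain
\begin{align*}
D\bigl((\delta^n g)(X(h))\bigr) = \Bigl(n(H-g)^{n-1} + (H-g^1)^n\Bigr)(X(h))\,h,
\end{align*}
which is exactly \eqref{416}.

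There is no real obstacle here since every ingredient is already available; the only thing to be careful about is the double use of the symbol $\delta^n$ (as a Section 3 operator on deterministic functions and as a Malliavin divergence on random tensors) and of $D$ (as the ordinary derivative and as the Malliavin derivative). The bridge between the two worlds is precisely the identity \eqref{416b} and the chain rule $D(\varphi(X(h)))=\varphi'(X(h))h$; once these are invoked in this order, the corollary follows in one line from Lemma \ref{lem32} and Theorem \ref{thm31}.
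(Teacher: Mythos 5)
Your proposal is correct and takes essentially the same route as the paper's own proof: both reduce via \eqref{416b} to differentiating $(\delta^n g)(X(h))$, apply the one-variable chain rule $D(\varphi(X(h)))=\varphi'(X(h))\,h$, invoke Lemma \ref{lem32}, and conclude with Theorem \ref{thm31}. There are no gaps.
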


\begin{proof} Note first for a real-valued function $f$ on  $R$, we have from \eqref{a41} that 
\begin{align} \label{ca41}
  D(f(X(h)))= f^{'}(X(h))h= (Df) (X(h))h.
\end{align}
Also, from Theorem \ref{thm41},
\begin{align*}
D(\delta^n(g(X(h))h^{\otimes n}))= & D \big((\delta^n g) (X(h)) \big)
\hspace{.3cm} \text{(using 
	\eqref{416b})}\\ 	
	 = &  (D(\delta^n g)) (X(h))h  \hspace{.3cm} \text{(using 
	 	\eqref{ca41})}\\ 
	 =&  [n \delta^{n-1}g + \delta^n g^1 ](X(h))h \hspace{.3cm}\text{(using ~Lemma \ref{lem32})} \\
     =& \big[n(H-g)^{n-1}+ (H-g^1)^{n}\big](X(h)) h,
\end{align*}
using Theorem \ref{thm31}. This proves the corollary.
\end{proof}

\noindent When $g(x)=1$, $g^1(x)=0$ and  $(H-0)^n=0$, using our rule
in Remark \ref{rem31}. Also, from \eqref{416},
\begin{align*}
D(\delta^n (h^{\otimes n}))
=& \big(n(H-1)^{n-1}+ (H-0)^{n}\big)(X(h))h \\
=& n H_{n-1} (X(h)) h,
\end{align*}
which is similar to Proposition 2.6.1 of NP (2012).

\noindent Similarly, when $g(x)=x$, we get
\begin{align*}
D(\delta^n (X(h) h^{\otimes n}))
=& \Big(n \big(H-x)^{n-1}+ (H-1)^{n}\Big)(X(h))h \\
=& \Big(n \big(H_{n-1}x-(n-1)H_{n-2} \big) + H_n\Big)(X(h))h \\
=& \Big(H_n+ nH_1 H_{n-1}-n(n-1)H_{n-2}  \Big) (X(h)) h,
\end{align*}
using $H_1=x.$

\nin  Note that the  formulas given above  are explicit and can easily be computed.

\vspace{.3cm}
\noindent {\bf Acknowledgments.}  This research was initiated when the second author was visiting the 
Depart of Statistics and Probability, Michigan State University, during Summer-2019. The authors thank  Dr. Frederi Viens for his support and
encouragements.

\vspace{.4cm}
\end{document}